\newtheorem{theorem}{Theorem}[section]
\newtheorem{lemma}[theorem]{Lemma}
\begin{document}

\title{\textbf{ The Unit-Gompertz Distribution:  Characterizations  and Properties }}

\author{
        M. Z. Anis\\
\footnotesize{\it{SQC \& OR Unit,  Indian Statistical Institute,}}\\
\footnotesize{\it{203, B. T. Road, Calcutta 700 108, India}}\\
\footnotesize{\it{E-mail:} {\rm zafar@isical.ac.in }}
}
\date{}
\maketitle

\begin{abstract}
In a recent paper, Mazucheli et al. (2019) introduced the unit-Gompertz (UG) distribution and studied some of its properties. In a complementary work, Anis and De (2020) corrected some of the  subtle errors in the original paper and studied some other interesting properties of this new distribution. However, to the best of our knowledge no charcterization results on this distribution have appeared in the literature. This is addressed in the present paper using truncated moments; and some more properties are investigated. 
\\\\
{\bf{Keywords}} :  Entropy; Incomplete Gamma Function; Truncated Moments . 
\\\\
{\bf{MSC Classification: Primary: }}62E10, {\bf{Secondary: }}62N05. 
\end{abstract}

\section{Introduction}

Data are generated in all branches of social, biological, physical and engineering sciences. They are modelled by means of probability distributions for better understanding. It is important and necessary that an appropriate probability distribution be fitted to the empirical data so that meaningful and correct conclusions can be drawn.

In this connection characterization results have been used to test goodness of fit for probability distributions. Marchetti and Mudholkar (2002) show that characterization theorems can be natural, logical and effective starting points for constructing goodness-of-fit tests.  Nikitin (2017) observes that tests based on characterization results are usually more efficient than other tests. Goodness-of-fit tests based on new characterizations  results abound in the literature. Baringhaus and Henze (2000) study two new omnibus goodness of fit tests for exponentiality, each based on a characterization of the exponential distribution via the mean residual life function. Akbari (2020) presents characterization results and new
goodness-of-fit tests based on these new characterizations for Pareto distribution. Earlier,  Gl\"{a}nzel (1987) derived characterization theorems for some families of both continuous and discrete distributions and used  them   as a basis for parameter estimation. 

The main purpose of the present work is to present characterization results for the Unit-Gompertz distribution introduced by Mazucheli et al. (2019). Essentially, this new distribution is derived from the  Gompertz distribution. Recall that the 
density function of the  Gompertz distribution is given by 
\[g\left( y\mid \alpha, \beta\right) = \alpha \beta\exp \left( \alpha+\beta y - \alpha e^{\beta y}\right),  \]
where $ y>0;  $ and $ \alpha > 0 $ and $ \beta >0 $ are shape and scale parameters, respectively. Using the transformation 
 \[ X= e^{-Y}, \]
this new distribution with support on $ \left( 0, 1\right),  $ which is referred to as the \emph{unit-Gompertz distribution} is obtained. Its  pdf and cdf are  given by 
\begin{equation}
f\left( x\mid\alpha, \beta\right) = \frac{\alpha\beta\exp \left[ -\alpha\left( 1/x^{\beta}-1\right) \right] }{x^{1+\beta}}; \mbox {~~~}\alpha>0, \beta>0, x \in (0, 1) \label{pdf1}
\end{equation}
and 
\begin{equation}
F\left( x\mid\alpha, \beta\right) =\exp \left[ -\alpha\left( 1/x^{\beta}-1\right) \right] \label{cdf} 
\end{equation}	
respectively.

 Mazucheli et al. (2019) used this new distribution to model the maximum flood level (in millions of cubic feet per second) for Susquehanna River at
 Harrisburg, Pennsylvania (reported in Dumonceaux and Antle (1973)) and tensile strength of polyester fibers as given in  Quesenberry
 and Hales (1980).  As an application, Jha et al. (2020) consider the problem of estimating multicomponent stress-strength reliability under progressive Type II censoring when stress and strength variables follow unit Gompertz distributions with common scale parameter. Jha et al. (2019) consider reliability estimation in a multicomponent stress–strength based on unit-Gompertz distribution. Inferential issues have also been studied. In this connection, mention may be made of  Kumar et al. (2019) who are concerned with inference for the unit-Gompertz model based on record values and inter-record times.  Anis and De (2020) not only correct some of the subtle errors in the original paper of Mazucheli et al. (2019) but also discuss reliability properties and stochastic ordering among others. However, no characterization results of this new distribution have been studied.

This paper attempts to fill in this gap and is organized as follows. Characterization of the distribution by truncated moments is presented in Section 2. Some more properties not investigated earlier are presented in Section 3.   Section 4 concludes the paper. 

\section{Characterizations}
We shall now give two characterizations of the Unit-Gompertz distribution based on truncated first moment. To prove the charcterization results, we shall need two lemmas and an assumption which are presented first.
\subsection{The Preleminaries}
	\textbf{Assumption $\mathcal{A}$:} Assume that $ X $ is an absolutely continuous random variable with the pdf given in (\ref{pdf1}) and the corresponding cdf given in (\ref{cdf}) above. Further, we also assume that $ E\left( X\right)  $ exits and the density $ f\left( x\right)  $ is differentaible. Define $ \beta = \sup \left\lbrace x: F\left( x\right) <1\right\rbrace  $ and $ \alpha = \inf \left\lbrace x: F\left( x\right) >0\right\rbrace. $

	\begin{lemma} \label{L1}
		Under the Assumption $\mathcal{A}$, if $E\left( X\mid X \leq x\right) = g\left( x\right) \tau  \left( x\right),  $ where  $ g\left( x\right) $ is a continuous differentiable function of $ x$ with the condition that $\int_{\alpha }^{x}\frac{u-g^{\prime }(u)}{g(u)}du$ is finite for all $ x> \alpha, $ and $ \tau  \left( x\right)=\frac{f\left( x\right) }{F\left( x\right)} $  then
		\[f\left( x\right) =c\exp\left[ {\int \frac{x-g^{\prime }\left( x\right) }{g\left( x\right) }dx}\right] , \] where   the constant  $ c$ is determined by the condition $\int_{\alpha}^{\beta }f\left( x\right)dx=1.$\\
	\end{lemma}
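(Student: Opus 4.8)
The plan is to start from the defining relation $E(X \mid X \le x) = g(x)\,\tau(x)$ and rewrite the conditional expectation explicitly. By definition,
\[
E(X \mid X \le x) = \frac{1}{F(x)}\int_{\alpha}^{x} u\, f(u)\,du,
\]
so the hypothesis becomes $\int_{\alpha}^{x} u\, f(u)\,du = g(x)\,\tau(x)\,F(x) = g(x)\,f(x)$, using $\tau(x) = f(x)/F(x)$. This single identity, valid for every $x > \alpha$, is the heart of the matter, and everything else is a differential-equations argument applied to it.

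The next step is to differentiate both sides with respect to $x$. The left-hand side differentiates to $x\, f(x)$ by the fundamental theorem of calculus, while the right-hand side gives $g'(x)\, f(x) + g(x)\, f'(x)$ by the product rule (this is where the assumed differentiability of $g$ and of $f$ in Assumption $\mathcal{A}$ is used). Equating yields
\[
x\, f(x) = g'(x)\, f(x) + g(x)\, f'(x).
\]
Rearranging to isolate the logarithmic derivative of $f$ produces
\[
\frac{f'(x)}{f(x)} = \frac{x - g'(x)}{g(x)},
\]
which is a first-order linear ODE in $\log f$. The finiteness condition on $\int_{\alpha}^{x}\frac{u - g'(u)}{g(u)}\,du$ guarantees that the right-hand side is integrable, so the division by $g(x)$ is legitimate and the integral that appears is well defined.

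The final step is to integrate this relation: since $\frac{d}{dx}\log f(x) = \frac{x - g'(x)}{g(x)}$, integrating gives $\log f(x) = \int \frac{x - g'(x)}{g(x)}\,dx + \text{const}$, and exponentiating yields
\[
f(x) = c \exp\!\left[\int \frac{x - g'(x)}{g(x)}\,dx\right],
\]
which is exactly the claimed form. The constant $c$ is then fixed by the normalization $\int_{\alpha}^{\beta} f(x)\,dx = 1$, as stated.

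I expect the main obstacle to be purely one of rigor rather than of ideas: justifying the differentiation of the integral term (the fundamental theorem of calculus applies because $f$ is continuous, being differentiable by assumption) and confirming that $f(x) \ne 0$ on the support so that dividing by $f(x)$ to form $f'/f$ is valid. The finiteness hypothesis on the integral is precisely what rules out pathologies and ensures the resulting expression is meaningful; I would flag that this is the role that assumption plays, rather than dwelling on the elementary calculus. No deep machinery is needed — the result is essentially a reformulation of the truncated-moment identity as a solvable ODE.
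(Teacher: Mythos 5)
Your proof is correct: the paper does not prove Lemma \ref{L1} itself but defers to Ahsanullah (2017), and the argument given there is exactly the one you present --- rewrite the hypothesis as $\int_{\alpha}^{x} u f(u)\,du = g(x) f(x)$, differentiate to obtain $f'(x)/f(x) = \bigl(x - g'(x)\bigr)/g(x)$, and integrate. Nothing further is needed.
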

	
	\begin{lemma}\label{L2}
		Under the Assumption $\mathcal{A}$, if $E\left( X\mid X \geq  x\right) = h\left( x\right) r \left( x\right),  $  where  $ h\left( x\right) $  is a continuous differentiable function of $x$ with
		the condition that $\int_{\alpha }^{x}\frac{u-h^{\prime }(u)}{h(u)}du$ is
		finite for all $ x>\alpha, $ and $ r\left( x\right)=\frac{f\left( x\right) }{1- F\left( x\right)} $   then
		\[f\left( x\right)=c\exp\left[- {\int \frac{x+h^{\prime }\left( x\right) }{h\left( x\right) }dx}\right] ,\]
		where $c$ is a
		constant determined by the condition $\int_{\alpha }^{\beta }f\left( x\right)dx=1.$
	\end{lemma}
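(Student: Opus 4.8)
The plan is to mirror the proof of Lemma~\ref{L1}, working now with the upper truncated moment; the only genuine difference will be a sign that enters because the conditioning variable appears as the \emph{lower} limit of integration. First I would unpack the hypothesis: by the definition of the conditional expectation on the upper tail,
\[ E\left( X\mid X\geq x\right) =\frac{\int_{x}^{\beta }uf\left( u\right) \,du}{1-F\left( x\right) }. \]
Substituting $ r\left( x\right) =f\left( x\right) /\left( 1-F\left( x\right) \right)  $ into the assumed identity $ E\left( X\mid X\geq x\right) =h\left( x\right) r\left( x\right)  $ and cancelling the common factor $ 1-F\left( x\right),  $ which is strictly positive for $ x<\beta,  $ I obtain the integral equation $ \int_{x}^{\beta }uf\left( u\right) \,du=h\left( x\right) f\left( x\right).  $

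Next I would differentiate both sides with respect to $ x.  $ The absolute continuity and differentiability granted by Assumption~$\mathcal{A}$ justify applying the Fundamental Theorem of Calculus to the left-hand side; since $ x $ is the lower limit, its derivative is $ -xf\left( x\right),  $ whereas the right-hand side gives $ h^{\prime }\left( x\right) f\left( x\right) +h\left( x\right) f^{\prime }\left( x\right).  $ Equating and rearranging produces the first-order linear differential equation
\[ \frac{f^{\prime }\left( x\right) }{f\left( x\right) }=-\frac{x+h^{\prime }\left( x\right) }{h\left( x\right) }. \]
Integrating and exponentiating then yields $ f\left( x\right) =c\exp\left[ -\int \frac{x+h^{\prime }\left( x\right) }{h\left( x\right) }\,dx\right],  $ with $ c $ fixed by the normalization $ \int_{\alpha }^{\beta }f\left( x\right) \,dx=1.  $

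The step requiring the most care is this differentiation together with the accompanying sign bookkeeping, since the leading minus sign is precisely what separates the present result from Lemma~\ref{L1}, where $ x $ sits as the upper limit and the corresponding sign is reversed. I would also invoke the stated finiteness hypothesis to confirm that the integrated exponent is well defined, so that the resulting $ f $ is a genuine integrable density, and I would note that $ h\left( x\right) \neq 0 $ throughout the interior of the support is implicitly required for the divisions above to be legitimate.
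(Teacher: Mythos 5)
Your proof is correct and is the standard argument: the paper itself does not prove this lemma but defers to Ahsanullah (2017), and the derivation given there is exactly yours --- reduce the hypothesis to $\int_{x}^{\beta}uf(u)\,du=h(x)f(x)$, differentiate (picking up the minus sign from $x$ being the lower limit), and integrate the resulting logarithmic derivative. Your closing remarks on the sign bookkeeping relative to Lemma~\ref{L1} and on needing $h(x)\neq 0$ are apt; you might also have noted that the finiteness condition as printed in the lemma involves $u-h^{\prime}(u)$ rather than the $u+h^{\prime}(u)$ your (correct) exponent requires, which appears to be a typo inherited from Lemma~\ref{L1}.
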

	
	See Ahsanullah (2017) for details of proof of these two lemmas.
	\subsection{ Characterization Theorems}
	We shall now state and prove two characterization theorems based on the truncated first moment.
	\begin{theorem}
		Suppose that the random variable $X$ satisfies the assumption $\mathcal{A}$ with $\alpha =0$ and $\beta =1.$ Then $E\left( X\mid X \leq x\right) = g\left( x\right) \tau  \left( x\right),  $ where $\tau (x)=\frac{f(x)}{F(x)}$ and		
		\begin{equation}
	g\left( x\right) =\frac{\alpha^{1/\beta}}{\alpha\beta}\exp \left( \frac{\alpha}{x^{\beta}}\right) x^{1+\beta}\Gamma\left( \left\lbrace 1-\frac{1}{\beta}\right\rbrace ; \frac{\alpha}{x^{\beta}}\right) \label{gx}
		\end{equation}
		
		where $ \Gamma\left( s;x\right)  $ is the upper incomplete gamma function defined by 
		\begin{equation}
		\Gamma\left( s; x\right) =\int_{x}^{\infty}t^{s-1}e^{-t} dt,\label{inc-up-gamma}
		\end{equation}
		if and only if 		
		\begin{equation}
		f\left( x\mid\alpha, \beta\right) = \frac{\alpha\beta\exp \left[ -\alpha\left( 1/x^{\beta}-1\right) \right] }{x^{1+\beta}}; \mbox {~~~}\alpha>0, \beta>0, x \in (0, 1). \label{pdf}
		\end{equation}
	
	\end{theorem}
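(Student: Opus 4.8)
The plan is to establish the two implications of the equivalence separately, in both directions leaning on the identity
\[
E\left( X\mid X\le x\right) F\left( x\right) =\int_{0}^{x} t\,f\left( t\right) dt,
\]
whose differentiation turns the hypothesis $E\left( X\mid X\le x\right) =g\left( x\right)\tau\left( x\right)$ (with $\tau=f/F$) into $\frac{d}{dx}\left[ g\left( x\right) f\left( x\right)\right] =x f\left( x\right)$, equivalently $\frac{f'}{f}=\frac{x-g'}{g}$; this is exactly the relation underlying Lemma~\ref{L1}.

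For \emph{necessity} I would start from the pdf (\ref{pdf}) and cdf (\ref{cdf}) and first record that $\tau\left( x\right) =f\left( x\right)/F\left( x\right) =\alpha\beta\,x^{-\left( 1+\beta\right)}$, which is immediate. The substantive step is the partial first moment $\int_{0}^{x} t f\left( t\right) dt=\alpha\beta e^{\alpha}\int_{0}^{x} t^{-\beta} e^{-\alpha/t^{\beta}}dt$. I would evaluate the remaining integral by the substitution $u=\alpha/t^{\beta}$, under which $t^{-\beta}dt$ becomes a constant multiple of $u^{-1/\beta}e^{-u}du$ and the limits $t:0\to x$ become $u:\infty\to\alpha/x^{\beta}$, so the integral collapses to $\Gamma\left( 1-\tfrac{1}{\beta};\,\alpha/x^{\beta}\right)$ in the notation of (\ref{inc-up-gamma}). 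Dividing by $F\left( x\right) =e^{\alpha}e^{-\alpha/x^{\beta}}$ gives $E\left( X\mid X\le x\right)$, and dividing once more by $\tau\left( x\right)$ produces exactly the $g\left( x\right)$ of (\ref{gx}).

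For \emph{sufficiency} I would invoke Lemma~\ref{L1}: the stated $g$ yields $f\left( x\right) =c\exp\!\left[\int\frac{x-g'\left( x\right)}{g\left( x\right)}dx\right]$, so it remains to compute $g'\left( x\right)$ and simplify the integrand. Writing $g\left( x\right) =C e^{w}x^{1+\beta}\Gamma\left( s;w\right)$ with $C=\alpha^{1/\beta}/\left( \alpha\beta\right)$, $w=\alpha x^{-\beta}$ and $s=1-1/\beta$, I would differentiate the triple product using $\frac{dw}{dx}=-\beta w/x$ and the standard rule $\frac{d}{dw}\Gamma\left( s;w\right) =-w^{s-1}e^{-w}$. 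Two identities drive the computation: $w x^{\beta}=\alpha$, and the contribution of the incomplete-gamma factor collapses, via $w^{s}=\alpha^{1-1/\beta}x^{1-\beta}$, to precisely $x$. Combining these gives $x-g'\left( x\right) =-\frac{g\left( x\right)}{x}\left( 1+\beta-\beta w\right)$, whence $\frac{x-g'\left( x\right)}{g\left( x\right)}=\alpha\beta\,x^{-\beta-1}-\frac{1+\beta}{x}$. Integrating and exponentiating returns $f\left( x\right) =c\,x^{-\left( 1+\beta\right)}e^{-\alpha x^{-\beta}}$, and imposing $\int_{0}^{1} f=1$ fixes $c=\alpha\beta e^{\alpha}$, recovering (\ref{pdf}).

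The substitution in the necessity direction is routine; the delicate part is the sufficiency computation. The main obstacle I anticipate is verifying cleanly that the term coming from $\frac{d}{dx}\Gamma\left( s;w\right)$ reduces to exactly $x$, so that the leading $x$ in $x-g'\left( x\right)$ cancels and the integrand integrates in closed form. Tracking the powers of $\alpha$ and $x$ through the identities $w x^{\beta}=\alpha$ and $w^{s}=\alpha^{1-1/\beta}x^{1-\beta}$ is where an arithmetic slip is most likely, so I would carry those two reductions out explicitly before assembling $g'\left( x\right)$.
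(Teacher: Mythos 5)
Your proposal is correct and follows essentially the same route as the paper: necessity by reducing $\int_{0}^{x}tf(t)\,dt$ to $e^{\alpha}\alpha^{1/\beta}\Gamma\left(1-\tfrac{1}{\beta};\alpha/x^{\beta}\right)$ via the substitution $u=\alpha/t^{\beta}$, and sufficiency by computing $g'$, obtaining $\frac{x-g'(x)}{g(x)}=\frac{\alpha\beta}{x^{\beta+1}}-\frac{\beta+1}{x}$, and integrating via Lemma~\ref{L1}. Your version actually spells out the differentiation of the incomplete-gamma factor (which the paper compresses into ``simplifying''), and the details check out.
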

	
	\begin{proof}
		Suppose $$ f\left( x\mid\alpha, \beta\right) = \frac{\alpha\beta\exp \left[ -\alpha\left( 1/x^{\beta}-1\right) \right] }{x^{1+\beta}}; \mbox {~~~}\alpha>0, \beta>0, x \in (0, 1). $$
		We have 
		\[g\left( x\right) \tau  \left( x\right)= E\left( X\mid X \leq x\right) = \frac{1}{F\left( x\right)}\int_{0}^{x}t f\left( t\right)dt\]		
		Since $\tau (x)=\frac{f(x)}{F(x)},$ it  follows that 
		\begin{eqnarray*}
		g\left( x\right) f\left( x\right) &=&\int_{0}^{x}t f\left( t\right)dt\\
		&=& \int_{0}^{x}t \frac{\alpha\beta\exp \left[ -\alpha\left( 1/t^{\beta}-1\right) \right] }{t^{1+\beta}}dt\\
		&=&e^{\alpha}\alpha^{1/\beta}\Gamma\left( \left\lbrace  1-\frac{1}{\beta}\right\rbrace  ; \frac{\alpha}{x^{\beta}}\right),  
		\end{eqnarray*}
	where $ \Gamma\left( s;x\right)  $ is the upper incomplete gamma function defined in (\ref{inc-up-gamma}) above. Hence, after simplifying, we obtain
	\[ g\left( x\right) =\frac{\alpha^{1/\beta}}{\alpha\beta}\exp \left( \frac{\alpha}{x^{\beta}}\right) x^{1+\beta}\Gamma\left( \left\lbrace 1-\frac{1}{\beta}\right\rbrace ; \frac{\alpha}{x^{\beta}}\right) . \] 
	Conversely, suppose that $  g\left( x\right) $ is given by (\ref{gx}). Differentiating  $g\left( x\right)$ with respect to $ x, $ and simplifying, we obtain
	\begin{eqnarray*}
	 g^{\prime}\left( x\right)&=& x-  g\left( x\right)\left[ \frac{\alpha \beta}{x^{\beta +1}} - \frac{\beta +1}{x}\right]. \\
	\end{eqnarray*}
Hence, 
\[ \frac{x-g^{\prime}\left( x\right)}{g\left( x\right)}= \frac{\alpha \beta}{x^{\beta +1}} - \frac{\beta +1}{x}.\]
By Lemma \ref{L1}, we have
\begin{equation}
\frac{f^{\prime}\left( x\right)}{f\left( x\right)}= \frac{\alpha \beta}{x^{\beta +1}} - \frac{\beta +1}{x}.\label{f-eqn1}
\end{equation}
Integrating both sides of (\ref{f-eqn1}) with respect to $ x, $ we obtain
\begin{equation}
f\left( x\right)=k\frac{\exp \left( -\alpha x^{-\beta }\right) }{x^{\beta +1}},
\end{equation}
where $ k $ is a constant. Using the condition $ \int_{0}^{1} f\left( x\right)dx=1,$ we get 
\begin{equation*}
f\left( x\right) = \frac{\alpha\beta\exp \left[ -\alpha\left( 1/x^{\beta}-1\right) \right] }{x^{1+\beta}}
\end{equation*}
This completes the proof.
	\end{proof}

	\begin{theorem}
		Suppose that the random variable $X$ satisfies the assumption $\mathcal{A}$ with $\alpha =0$ and $\beta =1.$ Then $E\left( X\mid X  \geq x\right) = h\left( x\right) r \left( x\right),  $ where $r(x)=\frac{f(x)}{1-F(x)}$ and		
		\begin{equation}
		h\left( x\right) =\frac{\alpha^{1/\beta}}{\alpha\beta}\exp \left( \frac{\alpha}{x^{\beta}}\right) x^{1+\beta}\left[\Gamma\left( \left\lbrace 1-\frac{1}{\beta}\right\rbrace ;\alpha \right)- \Gamma\left( \left\lbrace 1-\frac{1}{\beta}\right\rbrace ; \frac{\alpha}{x^{\beta}}\right)\right]  \label{hx}
		\end{equation}		
		where $ \Gamma\left( s;x\right)  $ is the upper incomplete gamma function defined in (\ref{inc-up-gamma}) above, if and only if 
				\begin{equation}
		f\left( x\mid\alpha, \beta\right) = \frac{\alpha\beta\exp \left[ -\alpha\left( 1/x^{\beta}-1\right) \right] }{x^{1+\beta}}; \mbox {~~~}\alpha>0, \beta>0, x \in (0, 1). \label{pdf2}
		\end{equation}
		
	\end{theorem}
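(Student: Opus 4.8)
The plan is to prove this second characterization theorem by exactly mirroring the structure of the first theorem's proof, but using Lemma~\ref{L2} (the right-truncated version) in place of Lemma~\ref{L1}. The proof naturally splits into two directions. For the forward direction, I would assume that $f$ is the unit-Gompertz pdf given in (\ref{pdf2}) and compute $E\left(X\mid X\geq x\right)$ directly. Since $r(x)=\frac{f(x)}{1-F(x)}$, the defining relation $E\left(X\mid X\geq x\right)=h(x)r(x)$ reduces to $h(x)f(x)=\int_{x}^{1}t\,f(t)\,dt$. The key computation is therefore evaluating $\int_{x}^{1} t\,f(t)\,dt$; by the same substitution used in Theorem~2.2 (setting $u=\alpha/t^{\beta}$, so that the integral becomes a gamma-type integral), this integral equals $e^{\alpha}\alpha^{1/\beta}\left[\Gamma\!\left(\left\{1-\frac{1}{\beta}\right\};\alpha\right)-\Gamma\!\left(\left\{1-\frac{1}{\beta}\right\};\frac{\alpha}{x^{\beta}}\right)\right]$. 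The difference of two upper incomplete gamma functions arises precisely because the integration runs over $(x,1)$ rather than $(0,x)$: the upper limit $t=1$ gives the argument $\alpha/1^{\beta}=\alpha$, while the lower limit $t=x$ gives $\alpha/x^{\beta}$. Dividing by $f(x)$ and simplifying then yields the stated expression (\ref{hx}) for $h(x)$.

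For the converse, I would assume $h(x)$ is given by (\ref{hx}) and differentiate. The crucial observation is that the difference of incomplete gamma functions differs from the single incomplete gamma function in Theorem~2.2 only by the \emph{constant} $\Gamma\!\left(\left\{1-\frac{1}{\beta}\right\};\alpha\right)$; consequently $h(x)$ and the earlier $g(x)$ have the same structural form, namely $\frac{\alpha^{1/\beta}}{\alpha\beta}e^{\alpha/x^{\beta}}x^{1+\beta}$ times a function of $\alpha/x^{\beta}$. The derivative of the bracketed gamma difference with respect to $x$ is, by the fundamental theorem of calculus, exactly the negative of the derivative of $-\Gamma\!\left(\left\{1-\frac{1}{\beta}\right\};\frac{\alpha}{x^{\beta}}\right)$, which is the same term that appeared in the first theorem. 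After differentiating and simplifying, I expect to obtain $h'(x)=-x-h(x)\left[\frac{\alpha\beta}{x^{\beta+1}}-\frac{\beta+1}{x}\right]$, so that
\[
\frac{x+h'(x)}{h(x)}=-\left[\frac{\alpha\beta}{x^{\beta+1}}-\frac{\beta+1}{x}\right].
\]
Applying Lemma~\ref{L2}, which gives $\frac{f'(x)}{f(x)}=-\frac{x+h'(x)}{h(x)}$, reproduces the same logarithmic-derivative equation (\ref{f-eqn1}) as before, namely $\frac{f'(x)}{f(x)}=\frac{\alpha\beta}{x^{\beta+1}}-\frac{\beta+1}{x}$. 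Integrating and normalizing via $\int_{0}^{1}f(x)\,dx=1$ then recovers the unit-Gompertz density, completing the proof.

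The main obstacle I anticipate is the bookkeeping in the converse direction: correctly tracking the sign conventions of Lemma~\ref{L2} (which carries a $+h'(x)$ and an overall minus sign, in contrast to the $-g'(x)$ of Lemma~\ref{L1}) so that the two sign reversals cancel and deliver precisely the same $\frac{f'}{f}$ as in Theorem~2.2. The differentiation of the gamma difference is routine once one notes that the added constant $\Gamma\!\left(\left\{1-\frac{1}{\beta}\right\};\alpha\right)$ vanishes under differentiation, so the algebra is essentially identical to the first theorem; the only genuine care needed is to verify that the boundary contribution at $t=1$ produces exactly the constant $\Gamma\!\left(\left\{1-\frac{1}{\beta}\right\};\alpha\right)$ in the forward direction and that this same constant drops out cleanly in the converse.
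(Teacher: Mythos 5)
Your proposal is correct and follows essentially the same route as the paper: the forward direction evaluates $\int_{x}^{1}t\,f(t)\,dt$ to obtain the difference of upper incomplete gamma functions (the paper writes this as $E(X)-\int_{0}^{x}t\,f(t)\,dt$, which is the same computation), and the converse differentiates $h$, notes that the constant $\Gamma\left(\left\{1-\frac{1}{\beta}\right\};\alpha\right)$ drops out, arrives at $h'(x)=-x-h(x)\left[\frac{\alpha\beta}{x^{\beta+1}}-\frac{\beta+1}{x}\right]$, and invokes Lemma~\ref{L2} to recover $\frac{f'}{f}$ and hence the density after normalization. Your sign bookkeeping for Lemma~\ref{L2} matches the paper's argument exactly.
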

	
	\begin{proof}
		Suppose $$ f\left( x\mid\alpha, \beta\right) = \frac{\alpha\beta\exp \left[ -\alpha\left( 1/x^{\beta}-1\right) \right] }{x^{1+\beta}}; \mbox {~~~}\alpha>0, \beta>0, x \in (0, 1). $$
		We have 
		\[h\left( x\right) r\left( x\right)= E\left( X\mid X \geq x\right) = \frac{1}{1-F\left( x\right)}\int_{x}^{1}t f\left( t\right)dt\]		
		Since $r(x)=\frac{f(x)}{1-F(x)},$ it  follows that 
		\begin{eqnarray*}
			h\left( x\right) f\left( x\right) &=&\int_{x}^{1}t f\left( t\right)dt\\
			&=& E\left( X\right) - \int_{0}^{x}t f\left( t\right)dt\\
			&=& E\left( X\right) -\int_{0}^{x}t \frac{\alpha\beta\exp \left[ -\alpha\left( 1/t^{\beta}-1\right) \right] }{t^{1+\beta}}dt\\
			&=&e^{\alpha}\alpha^{1/\beta}\left[ \Gamma\left( \left\lbrace  1-\frac{1}{\beta}\right\rbrace  ;\alpha  \right)-\Gamma\left( \left\lbrace  1-\frac{1}{\beta}\right\rbrace  ; \frac{\alpha}{x^{\beta}}\right)\right] ,  
		\end{eqnarray*}
		where $ \Gamma\left( s;x\right)  $ is the upper incomplete gamma function defined in (\ref{inc-up-gamma}) above. Hence,		
		\[ 	h\left( x\right) =\frac{\alpha^{1/\beta}}{\alpha\beta}\exp \left( \frac{\alpha}{x^{\beta}}\right) x^{1+\beta}\left[\Gamma\left( \left\lbrace 1-\frac{1}{\beta}\right\rbrace ;\alpha \right)- \Gamma\left( \left\lbrace 1-\frac{1}{\beta}\right\rbrace ; \frac{\alpha}{x^{\beta}}\right)\right].  \]
		Conversely, suppose that $  h\left( x\right) $ is given by (\ref{hx}). Differentiating  $h\left( x\right)$ with respect to $ x, $ and simplifying, we obtain
		\begin{eqnarray*}
			h^{\prime}\left( x\right)&=& -x-  h\left( x\right)\left[ \frac{\alpha \beta}{x^{\beta +1}} - \frac{\beta +1}{x}\right]. \\
		\end{eqnarray*}
		Hence, 
		\[ -\frac{x+h^{\prime}\left( x\right)}{h\left( x\right)}= \frac{\alpha \beta}{x^{\beta +1}} - \frac{\beta +1}{x}.\]
		By Lemma \ref{L2}, we have
		\begin{equation}
		\frac{f^{\prime}\left( x\right)}{f\left( x\right)}= \frac{\alpha \beta}{x^{\beta +1}} - \frac{\beta +1}{x}.\label{f-eqn}
		\end{equation}
		Integrating both sides of (\ref{f-eqn}) with respect to $ x, $ we obtain
		\begin{equation}
		f\left( x\right)=k\frac{e^{-\alpha x^{-\beta }}}{x^{\beta +1}},
		\end{equation}
		where $ k $ is a constant. Using the condition $ \int_{0}^{1} f\left( x\right)dx=1,$ we get 
		\begin{equation*}
		f\left( x\right) = \frac{\alpha\beta\exp \left[ -\alpha\left( 1/x^{\beta}-1\right) \right] }{x^{1+\beta}}
		\end{equation*}
		This completes the proof.
	\end{proof}

\section{Properties}

Most of the important properties of this distribution have been considered in Mazucheli et al. (2019) and the complementary paper by Anis and De (2020). For completeness we consider the $  L-\mathrm{moments} $ and two new measures of entropy.
\subsection{$ L-\mathrm{moments} $}
 $ L-\mathrm{moments} $ are summary statistics for probability distributions and data samples and are computed from linear combinations of the ordered data values (hence the prefix $L$). Hosking (1990) has shown that the $ L-\mathrm{moments} $ posses  theoretical advantages over ordinary moments. Moreover, they are less sensitive to outliers compared to the conventional moments. Computation of the first few sample $ L-\mathrm{moments} $ and $ L-\mathrm{moment} $ ratios of a data set provides a useful summary of the location, dispersion, and shape of the distribution from which the sample was drawn. They  can be used to obtain reasonably efficient estimates of parameters when a distribution is fitted to the data.  The main advantage of  $ L-\mathrm{moments} $  over conventional moments is that  $ L-\mathrm{moments}, $  being linear functions of the data, suffer less from the effects of sampling variability;  are more robust than conventional moments to outliers in the data and enable more secure inferences to be made from small samples about an underlying probability distribution.  $ L-\mathrm{moments} $  sometimes yield more efficient parameter estimates than the maximum likelihood estimates.\\
 These $ L-\mathrm{moments} $ can be defined in terms of \emph{probability weighted moments} 
 by a linear combination. The probability weighted moments $ M_{p, r, s} $ are defined by
 \begin{equation}
 M_{p, r, s}= \int_{-\infty}^{\infty}x^{p}\left[ F\left( x\right) \right] ^{r}\left\lbrace 1-F\left( x\right)\right\rbrace ^{s} f\left( x\right)dx.
 \end{equation}
Observe that $ M_{p, 0, 0} $ represents the conventional noncentral  moments. We shall use the quantities $ M_{1, r, 0} $  where the random variable $ x $ enters linearly. In particular, we define $ \beta_{r} = M_{1, r, 0}$ as the probability weighted moments. The $ \beta_{r}\mathrm{'s} $ find application, for example, in evaluating  the moments of order statisitcs (discuused in Anis and Dey (2020)). The linear combination between the $ L-\mathrm{moments} $ (denoted by $ \lambda_{i} $) and the PWMs  $ \beta_{r} $ are given below for the first four moments:
\begin{eqnarray}
\lambda_{1} & = & \beta_{0}\\
\lambda_{2} & = & 2\beta_{1}-\beta_{0}\\
\lambda_{3} & = & 6\beta_{2}-6\beta_{1}+\beta_{0}\\
\lambda_{4} & = & 20\beta_{3} - 30\beta_{2}+12\beta_{1}-\beta_{0}.
\end{eqnarray}
In the particular case of the unit-Gompertz distribution, after routine calculation, we  find that  the $ r\mathrm{-th} $ PWM is given by $$ \beta_{r}=\alpha^{1/\beta}\left( r+1\right) ^{\frac{1}{\beta}-1} e^{\left( r+1\right) \alpha}\Gamma \left( 1-\frac{1}{\beta}; \left( r+1\right) \alpha \right) ;$$
and hence the  the $ L-\mathrm{moments} $ can be obtained. It should be noted that the algebraic expressions are rather involved; but for given values of the parameters $ \alpha $ and $ \beta, $ these  $ L-\mathrm{moments} $ can be easily obtained numerically.

\subsection{Entropy}

Entropy is used to measure the amount of information (or uncertainty)
contained in a random observation regarding its parent distribution (population). A large value of entropy implies  greater uncertainty in the data. Since its introduction by Shannon (1948), it has witnessed many generalizations. Anis and De (2020) have discussed the Shannon and R{\'e}nyi entropies for the unit-Gompertz distribution. Here we look at two other genralizations, namely the Tsallis and Mathai–Haubold entropies.
\subsubsection{The Tsallis Entropy}
The Tsallis entropy was introduced by Tsallis (1988) and is defined by 
\begin{equation}
I_{T}\left( \gamma\right) =\frac{1}{\gamma-1}\left( 1-\int_{-\infty}^{\infty}\left[ f\left( x\right) \right] ^{\gamma}dx\right), 0<\gamma \neq 1. 
\end{equation}
Clearly, the Tsallis entropy reduces to the classical Shannon entropy as $ \gamma \rightarrow 1.$ There are many applications of the Tsallis entropy. In physics,  it is used to describe a
number of non-extensive systems (Hamity and Barraco, 1996). It has found application in image processing (Yu et al., 2009)
and signal processing (Tong et al., 2002). Zhang et al. (2010) use a Tsallis entropy -based measure to reveal the presence and the extent of development of burst suppression activity following brain injury. Zhang and Wu (2011)  use the Tsallis entropy to propose a global multi-level thresholding method for image 
segmentation.  \\
For the unit-Gompertz distribution, the Tsallis entropy is given by  
\begin{equation}
I_{T}\left( \gamma\right) =\frac{1}{\gamma-1}\left[ 1 - \frac{\alpha^{\frac{1-\gamma}{\beta}}\beta^{\gamma-1}}{\gamma^{\frac{\gamma+\beta\gamma-1}{\beta}}}e^{\alpha\gamma}\Gamma \left( \frac{\gamma +\beta\gamma-1}{\beta}; \alpha\gamma\right)  \right] , 0<\gamma \neq 1, 
\end{equation}
where $ \Gamma \left( s; x\right)  $ is defined by (\ref{inc-up-gamma}) above.

\subsubsection{The Mathai–Haubold Entropy}

Mathai and Haubold (2008) introduced a new measure of entropy. It is defined by 
\begin{equation}
I_{MH}\left( \gamma\right) = \frac{1}{\gamma-1 }\left( \int_{-\infty}^{\infty}\left[ f\left( x\right) \right] ^{2- \gamma}dx -1\right), \gamma \neq 1, \gamma <2. 
\end{equation}

The entropy  $ I_{MH}\left( \gamma\right) $ is an inaccuracy measure through disturbance or distortion of systems. In case of the unit-Gompertz distribution, the Mathai–Haubold entropy is given by 
\begin{eqnarray}
I_{T}\left( \gamma\right) & =& \frac{1}{\gamma-1}\left[\frac{\alpha^{\frac{\left( 1-\beta\right) \left( \gamma-1+\beta\right) }{\beta}}\beta^{1-\gamma}}{\left( 2-\gamma\right) ^{\frac{1+\beta-\beta^{2}}{\beta}}}e^{\alpha\left( 2-\gamma\right) }\Gamma \left( \frac{\left( 1+\beta\right) \left( 1-\gamma\right)}{\beta}+1; \alpha\left( 2-\gamma\right) \right) - 1  \right] , \nonumber \\
& & {} \mbox{~~~~~~~~~~~~~~~~~~~~~~~~~~~~~~~~~~~~~~~~~~~~~~~~~~~~~~}\gamma \neq 1, \gamma <2,
\end{eqnarray}
where $ \Gamma \left( s; x\right)  $ is defined by (\ref{inc-up-gamma}) above.

	\section{Conclusion}

	In this work we have presented two characterizations of the recently-introduced unit-Gompertz distribution based on truncated moments. To the best of our knowledge this is the only characterization of this distribution available in the literature till date. We hope this will enable researchers to understand whether the given data at hand can be modelled by this distribution. We also looked at the $ L- $moments and two measures of entropy.


\end{document}